\newtheorem{theorem}{Theorem}[section]
\newtheorem{proposition}[theorem]{Proposition}
\newtheorem{lemma}[theorem]{Lemma}
\newtheorem{conjecture}[theorem]{Conjecture}
\theoremstyle{definition}
\newtheorem{remark}[theorem]{Remark}
\newtheorem{question}[theorem]{Question}
\newtheorem{step}{Step}
\theoremstyle{theorem}
\title[Maximal Thurston-Bennequin number and reducible surgery]{Maximal Thurston-Bennequin number and reducible Legendrian surgery}
\author[Kouichi Yasui]{Kouichi Yasui}
\thanks{The author was partially supported by JSPS KAKENHI Grant Number 25800048.}
\date{September 10, 2015. Revised: February 3, 2016}
\keywords{4-manifold; Legendrian knot; Stein manifold; Dehn surgery; cabling conjecture}
\address{Department~of~Mathematics, Graduate School~of~Science, Hiroshima~University, 1-3-1 Kagamiyama, Higashi-Hiroshima, 739-8526, Japan}
\email{kyasui@hiroshima-u.ac.jp}
\begin{document}

\begin{abstract}We give a method for constructing a Legendrian representative of a knot in $S^3$ which realizes its maximal Thurston-Bennequin number under a certain condition. The method utilizes Stein handle decompositions of $D^4$, and the resulting Legendrian representative is often very complicated (relative to the complexity of the topological knot type). As an application, we construct infinitely many knots in 
$S^3$ each of which yields a reducible 3-manifold by a Legendrian surgery in the standard tight contact structure. This disproves a conjecture of Lidman and Sivek. 
\end{abstract}

\maketitle

\section{Introduction}\label{sec:intro}The maximal Thurston-Bennequin number, denoted by $\overline{tb}$, of a knot in $S^3$ is an important invariant in low dimensional topology. For example, a 4-manifold represented by a knot with framing less than $\overline{tb}$ admits a Stein structure (\cite{E1}, cf.\ \cite{G1}), and such Stein 4-manifolds have many applications to low dimensional topology, e.g., exotic smooth structures, contact 3-manifolds and 4-genera of knots (cf.\ \cite{GS}, \cite{OS1}). Here recall that $\overline{tb}(K)$ of a knot $K$ in $S^3$ is the maximal value of the Thurston-Bennequin number $tb(\mathcal{K})$ of a Legendrian representative $\mathcal{K}$ of $K$ in the standard tight contact structure on $S^3$. 
There are several invariants which give good upper bounds for $\overline{tb}$ of knots (e.g.\ \cite{AM}, \cite{LM2}, \cite{FT}, \cite{OzSz_tau}, \cite{Pl}, \cite{Pl_Kho}, \cite{Ras}, \cite{Shu}, \cite{Ng1}, \cite{Ng2}). By contrast, it is generally difficult to find a Legendrian representative of a knot realizing an upper bound of $\overline{tb}$ when the crossing number of the knot is large. Hence determining $\overline{tb}$ is a difficult problem in general. 

In this paper, we give a method for constructing a Legendrian representative of a knot in $S^3$ which realizes its maximal Thurston-Bennequin number under a certain condition. 
The method utilizes Stein handle decompositions of $D^4$, and the resulting Legendrian representative (in the front diagram of $S^3$) is often very complicated (relative to the complexity of the topological knot type). 
One can easily construct various examples of knots for which this method effectively works, and it seems difficult to determine their $\overline{tb}$ by other methods.

As an application of our method, we discuss reducible Legendrian surgeries. A long standing open problem in Dehn surgery theory is to determine framed knots in $S^3$ which produce reducible 3-manifolds. The cabling conjecture \cite{GAS} 
 asserts a complete characterization of such framed knots, and there are many related studies (see \cite{Gr} and the references therein). 
 Recently Lidman and Sivek~\cite{LSi} gave an interesting new approach to this problem from contact topology. Here we recall basic facts. The Legendrian surgery along a Legendrian knot $\mathcal{K}$ in the standard tight contact structure on $S^3$ is topologically the Dehn surgery along $\mathcal{K}$ with the contact $-1$ (i.e.\ $tb(\mathcal{K})-1$) framing, and any integral Dehn surgery along a knot $K$ in $S^3$ with framing less than $\overline{tb}(K)$ can be realized as a Legendrian surgery along a Legendrian representative of $K$ in the standard contact structure. Applying Eliashberg's theorem (\cite{E2}) on splittings of Stein 4-manifolds with reducible boundary 3-manifolds, Lidman and Sivek proved the following theorem. 
\begin{theorem}[Lidman and Sivek~\cite{LSi}]For a knot $K$ in $S^3$ with $\overline{tb}(K)\geq 0$, any Dehn surgery along $K$ with coefficient less than $\overline{tb}(K)$ is irreducible.
\end{theorem}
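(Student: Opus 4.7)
The plan is to argue by contradiction via Eliashberg's splitting theorem \cite{E2} for Stein fillings of reducible contact $3$-manifolds.

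Suppose some $n<\overline{tb}(K)$ gives a reducible surgery $Y:=S^3_n(K)$. Since $n+1\le\overline{tb}(K)$, by Legendrian stabilization applied to a representative attaining $\overline{tb}(K)$, one obtains a Legendrian representative $\mathcal{K}$ of $K$ with $tb(\mathcal{K})=n+1$. The Legendrian surgery along $\mathcal{K}$ is topologically the $n$-surgery on $K$, and its trace $X=D^4\cup_{\mathcal{K}}h^2$ (a single $2$-handle attached along $\mathcal{K}$ with framing $n=tb(\mathcal{K})-1$) admits a Stein structure with contact boundary $(Y,\xi)$, where $\xi$ is Stein-fillable.

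Write the (nontrivial) reducing decomposition $Y=Y_1\#\cdots\#Y_k$ with $k\ge 2$ and $Y_i\not\cong S^3$. Eliashberg's splitting theorem then produces a Stein boundary-connected-sum decomposition $X=X_1\natural\cdots\natural X_k$ with $\partial X_i=Y_i$. From the handle decomposition of $X$ (one $0$-handle, one $2$-handle) we read off $\pi_1(X)=1$ and $b_2(X)=1$. Van Kampen forces each $\pi_1(X_i)=1$, which in particular rules out $Y_i=S^1\times S^2$ (whose only Stein filling is $S^1\times D^3$, with $\pi_1=\mathbb{Z}$). Additivity of $b_2$ under $\natural$ gives, after relabeling, $b_2(X_1)=1$ and $b_2(X_i)=0$ for $i\ge 2$. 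Hence each such $X_i$ ($i\ge 2$) is a contractible Stein filling of a nontrivial integral homology $3$-sphere $Y_i$, while $X_1$ is a Stein filling of $Y_1$ with intersection form $(n)$.

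The main obstacle is the final step: deriving a contradiction from the simultaneous existence of these fillings. Purely smooth topology is insufficient, since nontrivial contractible Stein fillings of integral homology $3$-spheres are known to exist (Stein Mazur-type manifolds); some genuine contact-topological input is required. The natural tools are the Ozsv\'ath--Szab\'o contact invariant (which is nonzero for Stein-fillable contact structures and splits multiplicatively under connected sums) together with Heegaard-Floer correction-term inequalities for Stein fillings applied to each $X_i$. The hypothesis $\overline{tb}(K)\ge 0$ is expected to enter precisely here, presumably through the sign of $n$ and the resulting definiteness or positivity needed to make the decisive inequality sharp.
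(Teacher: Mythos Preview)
This theorem is not proved in the present paper; it is quoted from Lidman--Sivek \cite{LSi}, with only the one-line hint that the proof ``appl[ies] Eliashberg's theorem (\cite{E2}) on splittings of Stein $4$-manifolds with reducible boundary $3$-manifolds.'' So there is no paper proof to compare against beyond that sentence.

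Your setup is exactly the right one and matches that hint: realise the surgery as Legendrian surgery, get a Stein trace $X$ with $b_2=1$ and $\pi_1=1$, and then invoke Eliashberg's theorem to split $X=X_1\natural\cdots\natural X_k$ along the connected-sum spheres. Your deductions that each $X_i$ is simply connected, that exactly one summand carries the second homology, and that the remaining summands are contractible Stein fillings of nontrivial homology spheres are all correct.

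However, as you yourself say, the argument stops here: ``The main obstacle is the final step.'' That is a genuine gap, not a detail. Nothing you have written rules out the configuration you arrive at; contractible Stein fillings of nontrivial homology spheres certainly exist (indeed the handlebodies $Z^{(m)}$ in this very paper, once one adds a suitable $2$-handle, are of this flavour), so one cannot finish by soft topology alone. Your instinct that the hypothesis $\overline{tb}(K)\ge 0$ enters only at this last stage, and that Heegaard Floer input (the Ozsv\'ath--Szab\'o contact invariant and $d$-invariant inequalities for Stein cobordisms) is what is needed, is correct in spirit---Lidman and Sivek's paper is fundamentally a Heegaard Floer paper---but you have not supplied that argument. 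As written, this is an outline of the strategy rather than a proof; the contact/Floer-theoretic step that actually uses $\overline{tb}(K)\ge 0$ is the substance of \cite{LSi} and is missing here.
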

In other words, any Legendrian surgery along a knot ${K}$ with $\overline{tb}({K})\geq 0$ in the standard contact structure on $S^3$ yields an irreducible 3-manifold. Moreover, they conjectured that this result holds without the assumption $\overline{tb}({K})\geq 0$. 
\begin{conjecture}[Lidman and Sivek~\cite{LSi}]\label{conjecture} 
A knot in $S^3$ never yields a reducible 3-manifold by a Legendrian surgery in the standard tight contact structure. 
\end{conjecture}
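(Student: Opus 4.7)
Since the abstract explicitly says the paper \emph{disproves} Conjecture~\ref{conjecture}, my plan is not to prove it but to construct explicit counterexamples. Theorem~1.1 already rules out counterexamples with $\overline{tb}(K)\geq 0$, so I must search among knots with $\overline{tb}(K)<0$. The natural two-step strategy is first to choose a knot $K$ that is known (from classical Dehn surgery theory, e.g.\ cabling-type constructions surrounding the cabling conjecture) to admit a reducible integer surgery at some coefficient $n<0$, and then to realize that surgery as a Legendrian surgery by producing a Legendrian representative $\mathcal{K}$ of $K$ with $tb(\mathcal{K})=n+1$ that attains $\overline{tb}(K)$.

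The existence half is what the paper's main construction promises: a Stein handle decomposition of $D^{4}$ in which $K$ appears as the attaching circle of a $2$-handle with framing $n$ automatically realizes $K$ as a Legendrian knot with $tb=n+1$. The concrete task is therefore to engineer such a Stein handle decomposition via handle slides and cancellations on a standard diagram of a candidate $K$. For an \emph{infinite} family of counterexamples — as promised in the abstract — one would want a one-parameter family of reducible-surgery knots (for instance iterated cables, twisted cables, or some Mazur-style variant) all admitting analogous Stein presentations produced by a single recipe, so that one argument handles the whole family.

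The main obstacle is the sharpness condition $\overline{tb}(K)=n+1$. Merely exhibiting a Legendrian representative at some value of $tb$ does not suffice: the framing of the candidate reducible surgery must equal $\overline{tb}(K)-1$ on the nose, otherwise the surgery is not a Legendrian surgery in the standard contact structure. I would supply the matching upper bound on $\overline{tb}(K)$ from the invariants cited in the introduction (Kauffman/HOMFLY polynomial bounds, or the gauge-theoretic/Floer-theoretic bounds from $\tau$, $s$, or Khovanov-type invariants). The strength of the Stein handle approach is precisely that it can produce Legendrian fronts that look far too complicated to discover by direct front-diagram search relative to the crossing number of $K$, so even when $\overline{tb}(K)$ is deeply negative one has a realistic hope of pinning it down. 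The genuinely delicate point is aligning three ingredients at once, uniformly across an infinite family: a topologically reducible negative-integer surgery, a Stein $2$-handle presentation at exactly that framing, and a matching upper bound on $\overline{tb}(K)$ from an auxiliary invariant.
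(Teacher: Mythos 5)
Your high-level plan --- pick a knot with a reducible negative integer surgery, then use a Stein handle decomposition of $D^4$ to certify that the reducible slope is a Legendrian surgery slope --- is indeed the strategy of the paper. But the proposal contains a genuine conceptual error in what you identify as ``the main obstacle.'' You claim the reducible surgery coefficient $n$ must equal $\overline{tb}(K)-1$ on the nose, and that one must therefore supply a matching \emph{upper} bound on $\overline{tb}(K)$ from Kauffman/HOMFLY/Floer-type invariants. This is backwards: since a Legendrian knot can always be stabilized to lower $tb$ by one, every integer coefficient strictly less than $\overline{tb}(K)$ is realized by a Legendrian surgery (the paper states this explicitly in the introduction, and Theorem~\ref{intro:thm:reducible} even asserts the coefficient can be \emph{arbitrarily} less than $\overline{tb}(K)$). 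So the only thing that must be proved is the \emph{lower} bound $\overline{tb}(K)>n$, which is exactly what the Stein construction delivers; the upper bound $\overline{tb}(K_{m,n})\le -1$ in the paper comes for free from the adjunction inequality applied to the disk that $\widetilde{K}_{m,n}$ bounds in the sub 1-handlebody, and is used only to pin down $\overline{tb}$ exactly, not to establish the counterexample. Chasing polynomial upper bounds for these complicated cables would be an unnecessary (and likely infeasible) detour.

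A second inaccuracy is your description of the mechanism: you say a Stein decomposition of $D^4$ in which $K$ appears ``as the attaching circle of a $2$-handle with framing $n$'' automatically gives $tb=n+1$. In the paper's method $K$ is \emph{not} an attaching circle; it is realized as a Legendrian knot $\widetilde{\mathcal{K}}$ in the boundary of the sub 1-handlebody, disjoint from the attaching link $L$ of the $2$-handles, and only after the $2$-handles are attached along Legendrian representatives of $L$ at contact $-1$ framing does $\widetilde{\mathcal{K}}$ descend to a Legendrian knot in the standard $S^3$. (The statement that a Stein $2$-handle attachment along $K$ with framing $n$ forces $\overline{tb}(K)\ge n+1$ is precisely the open problem raised in the paper's final remark, not a known fact you may invoke.) Finally, the proposal constructs nothing concrete: the paper's family is the $(n,-1)$-cables $K_{m,n}$ of the ribbon knots $K_m$ with $m\le -4n+3$, whose $-n$-surgery is shown to be $\partial X_{n,m+4n}\# L(n,1)$ and hence reducible, while $\widetilde{K}_{m,n}$ is an unknot in $\partial Z^{(m)}_1$ yielding $\overline{tb}(K_{m,n})=-1>-n$.
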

This conjecture has the following supporting evidence. The cabling conjecture asserts that a framed knot yielding a reducible 3-manifold is the $(p,q)$-cable of a knot with $pq$-framing, and the standard cabling construction (cf.\ Section 5 in \cite{Gol}) only gives a Legendrian representative of the cable knot with $tb\leq pq$. 

Here we disprove this conjecture applying the aforementioned method.  
\begin{theorem}\label{intro:thm:reducible}There exist infinitely many knots in $S^3$ each of which yields a reducible 3-manifold by a Legendrian surgery in the standard tight contact structure. Furthermore, each knot $K$ can be chosen so that the surgery coefficient is arbitrarily less than $\overline{tb}(K)$. 
\end{theorem}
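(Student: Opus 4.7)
The strategy is to exhibit infinitely many cable knots $K=K_{p,q}(J)$ satisfying (i) the topological $pq$-surgery on $K$ is reducible, and (ii) $\overline{tb}(K)\geq pq+1$, with the difference $\overline{tb}(K)-pq$ unbounded across the family. Condition (i) is classical: Gordon's cabling formula gives $S^{3}_{pq}(K_{p,q}(J))=L(p,q)\,\#\,S^{3}_{q/p}(J)$, which is reducible whenever $|p|\geq 2$. Granted (ii), any Legendrian representative $\mathcal{K}$ with $tb(\mathcal{K})=pq+1$ realizes the $pq$-surgery as a Legendrian surgery (whose contact framing equals $tb(\mathcal{K})-1=pq$) in the standard tight contact structure on $S^{3}$, proving the theorem.

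The heart of the argument is (ii). As the introduction emphasizes, the standard cabling construction of Legendrian cables produces only $tb\leq pq$, so a new tool is needed. The plan is to apply the paper's main method: construct a Stein handle decomposition of $D^{4}$ one of whose $2$-handle attaching circles is a Legendrian representative of $K_{p,q}(J)$ attached with smooth framing $pq$ --- equivalently, having $tb=pq+1$. The existence of such a decomposition immediately forces $\overline{tb}(K)\geq pq+1$. A matching upper bound, needed to quantify the gap between the surgery coefficient and $\overline{tb}(K)$, is then supplied by a standard estimator (the slice-Bennequin inequality, the Ozsv\'ath--Szab\'o $\tau$-invariant, or the Kauffman/Khovanov--Rasmussen bounds), with $J$ chosen so that such an estimator is sharp on the cable at the target value.

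For the infinite family with arbitrarily large gap $\overline{tb}(K)-pq$, the plan is to take companions $J$ of increasing complexity (for example, iterated twists or cables of a base knot with large $\overline{tb}$) so that the initial Stein handle decomposition extends naturally to each member of the family, each extra handle move contributing to the lower bound on $tb$ while the cable parameters $p,q$ stay bounded. The main obstacle is the construction of the first Stein handle decomposition of $D^{4}$ containing $K_{p,q}(J)$ with the desired framing, and arranging it to be flexible enough to persist through an infinite family; once such diagrams are in hand, reading off the Legendrian front, verifying the matching upper bound on $\overline{tb}$, and invoking Gordon's cabling formula for reducibility are essentially mechanical applications of known results.
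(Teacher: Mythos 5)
Your overall strategy coincides with the paper's: take a $(p,q)$-cable whose $pq$-surgery is reducible, and certify $\overline{tb}>pq$ by exhibiting the cable inside a Stein handle decomposition of $D^4$. But as written the proposal has a genuine gap, and it is exactly the step you defer as ``the main obstacle'': you never produce a $2$-handlebody $X\cong D^4$ in which the cable sits as a good knot with realizable framing data, and without it there is no lower bound $\overline{tb}(K)\geq pq+1$ and hence no theorem. That construction is the entire technical content of the paper's argument: it exhibits $Z^{(m)}$ (a copy of $D^4$ built from one $1$-handle and one $m$-framed $2$-handle, Figure~\ref{fig:Z_m}), shows that the $(n,-1)$-cable $K_{m,n}$ of the ribbon knot $K_m$ is isotopic in $\partial Z^{(m)}$ to a knot $\widetilde{K}_{m,n}$ which is an \emph{unknot} in the boundary of the sub $1$-handlebody, and checks that for $m\leq -4n+3$ the $2$-handle can be put in Legendrian position with contact $-1$ framing, yielding a representative with $tb=-1$; the matching upper bound is then free because $\widetilde{K}_{m,n}$ bounds a disk in the $1$-skeleton and adjunction applies (Proposition~\ref{prop:tb}). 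Asserting that such a decomposition exists and ``is flexible enough to persist through an infinite family'' is a restatement of the problem, not a proof: the difficulty is precisely that a given cable does not obviously appear as an unknot (or other good knot) in the $1$-skeleton of some handle presentation of $D^4$.

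Two further points. First, your route to ``arbitrarily less than $\overline{tb}$'' (fix $p,q$ and grow $\overline{tb}(C_{p,q}(J))$ via complicated companions) is both unsubstantiated and harder than necessary; the paper instead fixes $\overline{tb}(K_{m,n})=-1$ and lets the cabling slope $pq=-n$ tend to $-\infty$, so the gap $n-1$ grows for free. Note also that no matching upper bound on $\overline{tb}$ is needed for the theorem: a lower bound $\overline{tb}(K)\geq pq+N$ already shows the surgery coefficient is at least $N$ below $\overline{tb}(K)$. Second, you must still verify that the family consists of infinitely many distinct knots; the paper does this via Gordon--Luecke \cite{GL} (a cable of a non-trivial knot admits a reducible surgery only at the cabling slope), which separates $K_{m,n}$ for distinct $n$. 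Relatedly, your reducibility claim requires $S^3_{q/p}(J)\neq S^3$, i.e., a non-trivial companion; this hypothesis should be stated explicitly, since for the unknot the $pq$-surgery on the $(p,q)$-cable with $|q|=1$ is an irreducible lens space.
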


In fact, we give a general method for constructing counterexamples. As an example, we will discuss the $(n,-1)$ cable $K_{m,n}$ of the ribbon knot $K_m$ in Figure~\ref{fig:ribbon_knot} for $n\geq 2$ and $m\leq -4n+3$. Although the standard cabling construction merely gives an estimate $\overline{tb}(K_{m,n})\geq -2n+1$ (see Figure~\ref{fig:cable} for a representative realizing this estimate), our method determines the explicit value $\overline{tb}(K_{m,n})=-1$ (Proposition~\ref{prop:tb}), implying the above theorem. Indeed our method yields the very complicated representative of $K_{m,n}$ in Figure~\ref{fig:knot_complicated} which realizes $\overline{tb}(K_{m,n})$. Here the notation and tangles $A_n$ and $B_n$ in the diagram are given in Figures~\ref{fig:notation} and \ref{fig:ABn}. 
We hope our method is useful for finding a new phenomenon in contact and symplectic topology. 

\subsection*{Acknowledgements}The author would like to thank Robert Gompf, Joshua Sabloff and Steven Sivek for their helpful comments on an earlier version of this paper. He also thanks the referee for his/her helpful comments. 
 
\section{Stein handlebody and notation} 
In this section, we recall basic definitions and properties. We also introduce our notations, some of which are different from the standard ones. 
\subsection{Stein handlebody}We briefly review basics of Stein handlebodies. For details, see \cite{G1} and \cite{GS}. For basics of contact topology and Legendrian knots, the readers can consult \cite{OS1}. 
Throughout this paper, we assume that a handlebody is 4-dimensional, compact, connected and oriented. 

Recall that a \textit{1-handlebody} (resp.\ \textit{2-handlebody}) is a handlebody which consists of $0$- and 1-handles (resp.\ 0-, 1- and 2-handles). 
We call a handlebody a \textit{Stein handlebody}, if it is constructed from a 1-handlebody $\natural_n S^1 \times D^3$ $(n\geq 0)$ by attaching 2-handles along a Legendrian link in the Stein fillable contact structure on the boundary $\#_n S^1 \times S^2$ such that the framing of each Legendrian knot is $-1$ relative to the framing induced from the contact plane (i.e.\ contact $-1$ framing). According to a result of Eliashberg \cite{E1} (cf.\ \cite{G1}), any Stein handlebody admits a Stein structure, extending the Stein structure on the 0-handle $D^4$. We note that each $\#_n S^1 \times S^2$ $(n\geq 0)$ admits a unique Stein fillable contact structure up to isotopy (\cite{E3}).  In the rest of this paper, a Legendrian link in $\#_n S^1 \times S^2$ $(n\geq 0)$ means the one with respect to the Stein fillable contact structure. By a result of Gompf \cite{G1} (cf.\ \cite{GS}), one can draw a Legendrian link in the boundary $\#_n S^1 \times S^2$ of a 1-handlebody using Gompf's  \textit{Legendrian link diagram in standard form}. In particular, we can draw a handlebody diagram of a Stein handlebody. 

The \textit{Thurston-Bennequin number} $tb(\mathcal{K})$ of a Legendrian knot $\mathcal{K}$ in the boundary $\#_n S^1 \times S^2$ of a 1-handlebody is defined to be the difference between the contact framing and the $0$-framing. Here recall that the \textit{$0$-framing} of a knot in $S^3$ (i.e.\ the boundary of 0-handle) is defined to be the Seifert framing (i.e.\ the one induced from a Seifert surface), and recall that the \textit{$0$-framing} of a knot in the boundary of a 1-handlebody is defined to be the Seifert framing induced from the dotted circle notation of the 1-handlebody. Consequently, if a knot bounds a Seifert surface in $\#_n S^1 \times S^2$, then the $0$-framing coincides with the framing induced from the surface. Note that a knot in $\#_n S^1 \times S^2$ bounds a surface if and only if the knot is null-homologous. For details of the Gompf's standard form diagram and calculation of the Thurston-Bennequin number in $\#_n S^1 \times S^2$, we refer to \cite{G1} and \cite{GS}. For the definition of $0$-framings, see \cite{GS}. 

Let $\mathcal{K}$ be a Legendrian knot in $S^3$, and let $g_4(\mathcal{K})$ denote the 4-genus of $\mathcal{K}$ (i.e.\ the minimal genus of a smoothly embedded surface in $D^4$ which bounds $\mathcal{K}$). One can estimate $g_4(\mathcal{K})$ as follows. By attaching a 2-handle to $D^4$ along $\mathcal{K}$ with framing $tb(\mathcal{K})-1$, we have a Stein 4-manifold. Since any Stein 4-manifold can be embedded into a closed minimal complex surface of general type with $b_2^+>1$ (\cite{LM1}), applying the adjunction inequality (\cite{FS3, KM1,MST, OzSz_ad}) for this closed 4-manifold  together with Gompf's Chern class formula (\cite{G1}), we obtain the following adjunction inequality, where $r(\mathcal{K})$ denotes the rotation number of $\mathcal{K}$. 
\begin{theorem}[\cite{LM1}, \cite{AM}, \cite{LM2}] $tb(\mathcal{K})+|r(\mathcal{K})|\leq 2g_4(\mathcal{K})-1$.\end{theorem}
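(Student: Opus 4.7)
The plan is to follow the three-step strategy outlined in the paragraph preceding the statement, carrying out each step explicitly.

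First, I would attach a $2$-handle to $D^4$ along $\mathcal{K}$ with framing $tb(\mathcal{K})-1$; by Eliashberg's theorem this produces a Stein $4$-manifold $X$. A minimal-genus smoothly embedded surface $F\subset D^4$ bounded by $\mathcal{K}$, capped off with the core disc of the $2$-handle, yields a closed smoothly embedded oriented surface $\Sigma\subset X$ with $g(\Sigma)=g_4(\mathcal{K})$ and self-intersection $[\Sigma]\cdot[\Sigma]=tb(\mathcal{K})-1$.

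Second, I would invoke the Lisca--Matic theorem to embed $X$ symplectically into a closed minimal complex surface $Z$ of general type with $b_2^+(Z)>1$. Because the embedding is compatible with the Stein/symplectic structure, Gompf's Chern class formula computes
\[
\langle c_1(Z),[\Sigma]\rangle=\langle c_1(X,J),[\Sigma]\rangle=r(\mathcal{K}).
\]

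Third, I would apply the adjunction inequality for embedded surfaces in a closed $4$-manifold with $b_2^+>1$ and non-trivial Seiberg--Witten basic class to $\Sigma\subset Z$, obtaining
\[
2g(\Sigma)-2\;\geq\;[\Sigma]^2+\bigl|\langle c_1(Z),[\Sigma]\rangle\bigr|.
\]
Plugging in the values from the first two steps gives
\[
2g_4(\mathcal{K})-2\;\geq\;\bigl(tb(\mathcal{K})-1\bigr)+|r(\mathcal{K})|,
\]
which rearranges to the desired inequality $tb(\mathcal{K})+|r(\mathcal{K})|\leq 2g_4(\mathcal{K})-1$.

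The main obstacle is verifying that the adjunction inequality is applicable in every parameter range. One needs to handle the low-genus and small self-intersection cases (for example $g_4(\mathcal{K})=0$, where $\Sigma$ is a sphere) separately, using the fact that a minimal surface of general type with $b_2^+>1$ admits no smoothly embedded sphere of self-intersection $\geq-1$; the generalized forms of the adjunction inequality are precisely what make the Lisca--Matic framework applicable uniformly, and invoking them in the appropriate form is the crux of the argument.
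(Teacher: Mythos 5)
Your proposal follows exactly the argument sketched in the paper: attach a $2$-handle along $\mathcal{K}$ with framing $tb(\mathcal{K})-1$ to obtain a Stein $4$-manifold, embed it into a closed minimal complex surface of general type with $b_2^+>1$ via Lisca--Mati\'c, compute $\langle c_1,[\Sigma]\rangle=r(\mathcal{K})$ by Gompf's Chern class formula, and apply the adjunction inequality to the capped-off surface. Your closing remark about the genus-zero case is also the caveat the paper itself flags, so the approaches coincide.
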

Note that this holds even for the genus zero case (cf.\ \cite{GS, OS1}), unlike the version for general closed 4-manifolds. 

\subsection{Notations and definitions}Here we introduce our notations and definitions. Beware that some of our definitions are different from the standard ones. 

The 4-manifold represented by a framed knot in $S^3$ means the 4-manifold obtained from $D^4$ by attaching a 2-handle along the framed knot. 

For a Legendrian knot diagram, left and right handed twists are abbreviated as shown in Figure~\ref{fig:notation}. 

\begin{figure}[h!]
\begin{center}
\includegraphics[width=4.5in]{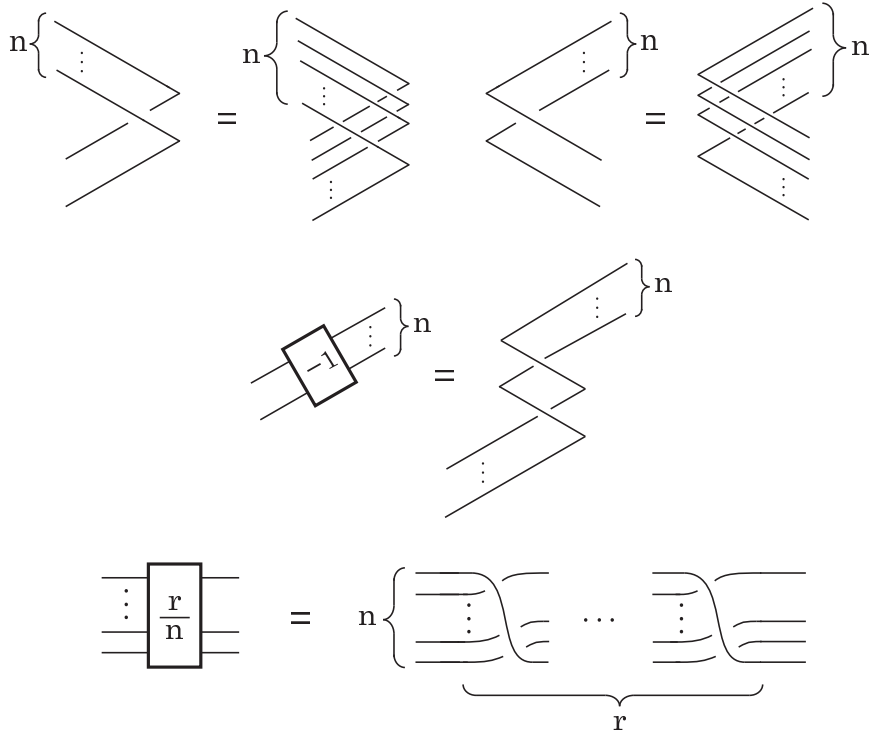}
\caption{Notations on Legendrian versions of twists ($n\geq 1$, $r\geq 1$)}
\label{fig:notation}
\end{center}
\end{figure}

For a knot $K$ in the boundary of a 1-handlebody, a Legendrian knot $\mathcal{K}$ in the boundary is called a \textit{Legendrian representative} of $K$, if $\mathcal{K}$ satisfies the condition below. 
\begin{itemize}
 \item In the case where $K$ is homologically trivial, $\mathcal{K}$ is  smoothly isotopic to $K$. 
 \item In the case where $K$ is homologically non-trivial, $\mathcal{K}$ is smoothly isotopic to $K$ without sliding $\mathcal{K}$ ``over'' any 1-handle. More explicitly, this condition is stated as follows. Consider the dotted circle notation of the 1-handlebody. The condition is that $\mathcal{K}$ is isotopic to $K$ by an isotopy of $S^3$ fixing the disks bounded by the dotted circles. Note that this condition does not allow slidings over the dotted circles. 
 
\end{itemize}
We use this narrow definition to define the maximal Thurston-Bennequin number for a homologically non-trivial knot. 
The \textit{maximal Thurston-Bennequin number} $\overline{tb}(K)$ of a knot $K$ in the boundary of a 1-handlebody is defined to be the maximal value of $tb(\mathcal{K})$ of a Legendrian representative $\mathcal{K}$ of $K$. 
\begin{lemma}For any knot $K$ in the boundary of a 1-handlebody, $\overline{tb}(K)$ is a finite number. 
\end{lemma}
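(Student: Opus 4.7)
The plan is to reduce the assertion to the classical finiteness of $\overline{tb}$ for knots in $S^3$. Fix a handlebody diagram of $\natural_n S^1\times D^3$ in dotted-circle notation, so that the 1-handles appear as dotted circles in $S^3$ bounding disks $D_1,\dots,D_n$. View $K$ as a fixed curve in $S^3$ disjoint from $D_1\cup\dots\cup D_n$, and let $K_0$ denote its topological knot type as a knot in $S^3$. By the narrow definition of a Legendrian representative, any such $\mathcal{K}$ is smoothly isotopic to $K$ through an isotopy of $S^3$ that fixes every $D_i$; consequently $\mathcal{K}$ also has type $K_0$ as a knot in $S^3$.

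Next I would observe that $\mathcal{K}$, drawn in Gompf's standard form, is simultaneously Legendrian in the standard tight contact structure on $S^3$: its front sits in the complement of the 1-handle balls, where the contact planes agree with those of $(S^3,\xi_{\mathrm{std}})$, and the cusp/crossing data satisfy the usual Legendrian conditions. The contact framing of $\mathcal{K}$ therefore coincides when computed in either $\#_n S^1\times S^2$ or in $S^3$. Moreover, by the 0-framing convention recalled above, the 0-framing of $K$ in $\#_n S^1\times S^2$ is defined through the dotted-circle notation, and so equals the Seifert framing of $K_0$ in $S^3$. Subtracting one framing from the other gives
\[
tb_{\#_n S^1\times S^2}(\mathcal{K})\;=\;tb_{S^3}(\mathcal{K}),
\]
so $tb(\mathcal{K})$ is the Thurston--Bennequin number of some Legendrian representative of the $S^3$-knot $K_0$ in the classical sense.

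Taking the supremum over all Legendrian representatives $\mathcal{K}$ of $K$ then yields $\overline{tb}(K)\leq \overline{tb}(K_0)$, and the right-hand side is finite by the classical Bennequin inequality (or by the slice--Bennequin/adjunction inequality $tb+|r|\leq 2g_4-1$ recalled just before the lemma), applied to $K_0\subset S^3$. Hence $\overline{tb}(K)<\infty$.

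The only point that requires care is the identification $tb_{\#_n S^1\times S^2}(\mathcal{K})=tb_{S^3}(\mathcal{K})$; this is the reason the narrow definition (no sliding over 1-handles) and the dotted-circle convention for the 0-framing were introduced, and once these are in hand the statement is essentially formal.
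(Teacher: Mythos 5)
There is a genuine gap at the step you yourself flag as ``the only point that requires care.'' A Legendrian knot $\mathcal{K}$ in $\partial(\natural_n S^1\times D^3)$ drawn in Gompf's standard form is \emph{not} in general a closed Legendrian curve in $(S^3,\xi_{\mathrm{std}})$: whenever $K$ runs over a 1-handle, the front of $\mathcal{K}$ passes through the 1-handle region, where the two attaching balls are identified (equivalently, the strand pierces the disk bounded by a dotted circle). Your setup assumes $K$ is disjoint from $D_1\cup\dots\cup D_n$, but a knot disjoint from all these disks is automatically null-homologous in $\#_n S^1\times S^2$, so this hypothesis silently excludes the homologically non-trivial case --- which is exactly the case the narrow definition of Legendrian representative was introduced for, and the harder half of the lemma. (Even for null-homologous $K$, a Legendrian representative may pass through the disks geometrically, since in that case the definition permits sliding over 1-handles.) Consequently the identity $tb_{\#_n S^1\times S^2}(\mathcal{K})=tb_{S^3}(\mathcal{K})$ is not meaningful as stated: the right-hand side refers to a knot in $S^3$ that does not yet exist until you specify how to close up, or reroute, the strands running through the 1-handles.

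The paper's proof supplies precisely the missing step. For homologically non-trivial $K$ it performs an explicit local alteration of the diagram near each 1-handle (Figure~\ref{fig:lem_max}) to produce an honest Legendrian knot $\mathcal{K}'$ in $S^3$; this alteration changes the smooth knot type to some $K'$ (not the ``underlying'' type of $K$, but a fixed type depending only on $K$, thanks to the narrow definition) and shifts the Thurston--Bennequin number by a constant $\alpha$ depending only on $K$, i.e.\ $tb(\mathcal{K}')=tb(\mathcal{K})+\alpha$ rather than $tb(\mathcal{K}')=tb(\mathcal{K})$. Finiteness of $\overline{tb}(K')$ in $S^3$ then bounds $tb(\mathcal{K})$ uniformly. (The null-homologous case is handled separately and more directly by the adjunction inequality for Stein 4-manifolds.) Your overall strategy --- reduce to finiteness in $S^3$ --- is the right one, but the reduction requires the diagram surgery and the correction constant $\alpha$; it is not formal.
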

\begin{proof}
In the case where $K$ is null-homologous, this claim immediately follows from the adjunction inequality for general Stein 4-manifolds. In the case where $K$ is homologically non-trivial, we can check this claim as follows. Consider a Legendrian representative $\mathcal{K}$ of $K$ in the boundary of a 1-handlebody. By altering the diagram of $\mathcal{K}$ as shown in Figure~\ref{fig:lem_max}, we obtain a Legendrian knot $\mathcal{K}'$ in $S^3$. Let ${K}'$ denote the smooth knot type of $\mathcal{K}'$.  Due to our narrow definition of a Legendrian representative, we easily see that ${K}'$ is independent of the choice of $\mathcal{K}$. Furthermore, as seen from the diagram, $tb(\mathcal{K}')=tb(\mathcal{K})+\alpha$ for some constant $\alpha$ which depends only on ${K}$. Since $\overline{tb}$ of a knot in $S^3$ is finite, this fact shows that $\overline{tb}(K)$ is also finite. 
\end{proof}
\begin{figure}[h!]
\begin{center}
\includegraphics[width=3.5in]{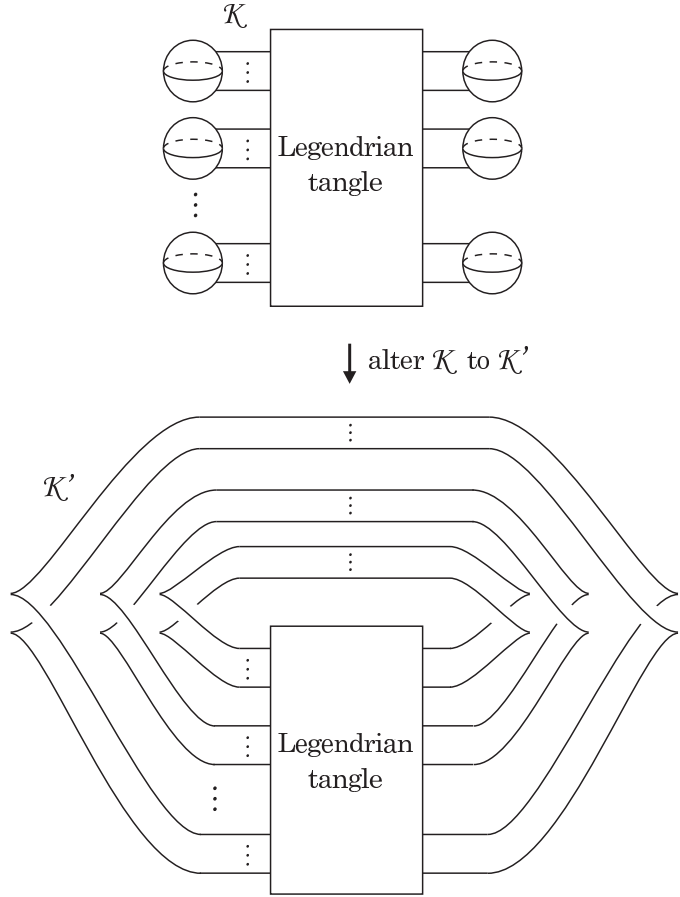}
\caption{Alter a Legendrian knot $\mathcal{K}$ in $\#_n S^1\times S^2$ to a Legendrian knot $\mathcal{K}'$ in $S^3$}
\label{fig:lem_max}
\end{center}
\end{figure}

We remark that, if we change our narrow definition of a Legendrian representative to the natural one, then there are many examples of homologically non-trivial knots with $\overline{tb}=\infty$. 
\section{The method}\label{sec:method}
We give a method for constructing a Legendrian representative of a knot realizing its maximal Thurston-Bennequin number. 
Before the method, we note that a knot $\widetilde{K}$ in the boundary of the sub 1-handlebody $X_1$ of a 2-handlebody $X$ represents a knot in the boundary $\partial X$, since we may regard $\widetilde{K}$ as a knot in $\partial X$ after attaching 2-handles of $X$ to $X_1$. 

Now let $K$ be a knot in $S^3$.  The method proceeds as follows. We do not claim that this procedure is always applicable.
\begin{step}\label{step1}
Find a 2-handlebody $X$ diffeomorphic to $D^4$ such that $K\subset \partial X(\cong S^3)$ is represented by a \textit{good} knot $\widetilde{K}$ in the boundary of the sub 1-handlebody $X_1$ of $X$. 
\end{step}

Here we say that a knot $\widetilde{K}$ in $\partial X_1$ is \textit{good}, if we can draw a Legendrian representative of $\widetilde{K}$ in $\partial X_1$ realizing $\overline{tb}(\widetilde{K})$. For example, torus knots are good knots in this sense. (We define torus knots in $\#_n S^1 \times S^2$ as cables of the unknot in $\#_n S^1 \times S^2$, similarly to the $S^3$ case. Note that an unknot in $\#_n S^1 \times S^2$ is a knot bounding a disk.) Experimentally, the method is effective when we choose a null-homologous knot as $\widetilde{K}$. 

Let $L$ be the link in $\partial X_1$ which consists of the attaching circles of the 2-handles of $X$.

\begin{step}\label{step2}By ignoring the link $L$, first isotope $\widetilde{K}$ to its Legendrian representative $\widetilde{\mathcal{K}}$ in $\partial X_1$ which realizes $\overline{tb}(\widetilde{K})$, and then keep track of the position of the link $L$ in $\partial X_1$ by this isotopy. Next, fixing the position of the Legendrian knot $\widetilde{\mathcal{K}}$, isotope $L$ to its Legendrian representative $\mathcal{L}$ so that the framings of 2-handles of $X$ coincide with the contact $-1$ framing of the Legendrian representative $\mathcal{L}$. Now $X$ is a Stein handlebody. By attaching the 2-handles of $X$ to $X_1$ along $\mathcal{L}$, we regard $\widetilde{\mathcal{K}}$ as a Legendrian knot (denoted by $\mathcal{K}$) in $\partial X\cong S^3$. Here the contact structure on $\partial X$ is the one induced from the Stein structure on $X$. Since $S^3$ has a unique Stein fillable contact structure up to isotopy, $\mathcal{K}$ gives a Legendrian representative of $K$ in the standard tight contact structure on $S^3$. 
\end{step}

We remark a simple sufficient condition that the resulting Legendrian representative $\mathcal{K}$ realizes $\overline{tb}(K)$: if $\widetilde{K}$ bounds a surface of genus $g$ in $X_1$ satisfying $2g-1={tb}(\widetilde{\mathcal{K}})$, then $\overline{tb}(K)={tb}({\mathcal{K}})(={tb}(\widetilde{\mathcal{K}}))$ due to the adjunction inequality. For example, this condition holds if $\widetilde{K}$ is a positive torus knot in $\partial X_1$. Of course, one can also use other upper bounds of $\overline{tb}$ (cf.\ Section~\ref{sec:intro}) to see whether $\mathcal{K}$ realizes $\overline{tb}(K)$. 

\begin{remark}\label{sec:method:remark:construction} $(1)$ (Construction of various examples) One can easily construct various examples of knots for which this method produces Legendrian representatives realizing $\overline{tb}$ as follows. Construct a 2-handlebody $X$ diffeomorphic to $D^4$, and put a null-homologous knot $\widetilde{K}$  on the boundary $\partial X_1$ of the sub 1-handlebody of $X$ satisfying $\overline{tb}(\widetilde{K})=2g_{X_1}(\widetilde{K})-1$. Here $g_{X_1}(\widetilde{K})$ denotes the minimal genus of a smoothly embedded surface in $X_1$ bounded by $\widetilde{K}$. For example, any positive torus knot in $\partial X_1$ satisfies this condition of $\widetilde{K}$. Now let $K$ be a knot in $\partial X\cong S^3$ represented by $\widetilde{K}$. This process corresponds to Step~\ref{step1} of the method. If $\overline{tb}$ of the attaching circle of each 2-handle in $\partial X_1$ is sufficiently larger than the framing of the 2-handle, then one can clearly apply Step~\ref{step2}. By the assumption on $\widetilde{K}$, the resulting Legendrian knot gives a Legendrian representative of $K$ realizing $\overline{tb}(K)$. By using this construction, we can construct many counterexamples to Conjecture~\ref{conjecture}. See the next section.
\smallskip\\ 
$(2)$ (Variant of the method) Although we required that we (can) draw a Legendrian representative of $\widetilde{K}$ realizing $\overline{tb}(\widetilde{K})$ in $\partial X_1$, the method without this condition is still effective for finding a good lower bound of $\overline{tb}(K)$. See Subsection~\ref{subsec:variant} for an example, which also gives counterexamples to Conjecture~\ref{conjecture}. 
\end{remark}

\begin{remark}\label{rem:framing}Regarding the Legendrian representative $\mathcal{K}$ obtained by Step~\ref{step2}, beware that $tb(\mathcal{K})$ may not be the same value as $tb(\widetilde{\mathcal{K}})$. This is because the $0$-framing of  a knot in $S^3$ and that of a knot in the boundary of a 1-handlebody is defined differently. One can easily calculate $tb(\mathcal{K})$ from $tb(\widetilde{\mathcal{K}})$ by checking the difference of $0$-framings induced from $S^3$ and the boundary of the 1-handlebody $X_1$. In particular, if $\widetilde{K}$ is null-homologous in $\partial X_1$, then $tb(\mathcal{K})$ is equal to $tb(\widetilde{\mathcal{K}})$. 
\end{remark}

Since $\mathcal{K}$ is given as a Legendrian knot on the boundary of the Stein handlebody $X$, one might wish to find a representative in the front diagram of $S^3$. 

\begin{step}[Optional]\label{step3}Slide $\mathcal{K}$ over the 2-handles of $X$ so that the resulting knot does not go over any 1-handle and that it is Legendrian preserving $tb(\mathcal{K})$. Then cancel all 1-handles of ${X}$ with 2-handles. The resulting Legendrian knot gives a Legendrian representative of $K$ in the front diagram of $S^3$ realizing $\overline{tb}(K)$. 
\end{step}

This process often yields a very complicated Legendrian diagram. Note that this step is not necessary for determining $\overline{tb}$.

\section{Example}
We demonstrate the method using knots obtained in \cite{Y10}, and we prove Theorem~\ref{intro:thm:reducible}. Let us recall that, for a knot $K$ in $S^3$, the $(p,q)$-cable $C_{p,q}(K)$ of $K$ is defined to be a knot in $S^3$ which is a simple closed curve in the boundary $\partial \nu(K)$ of the tubular neighborhood $\nu(K)$ of $K$ representing the class $p[K']+q[\alpha]$ in $H_1(\partial \nu(K);\mathbb{Z})$. Here $\alpha$ is the positively oriented meridian of $K$, and $K'$ is the $0$-framing of $K$ induced from a Seifert surface of $K$. 

For an integer $m$, let $K_m$ be the ribbon knot in Figure~\ref{fig:ribbon_knot}, where the box denotes the $(m-1)$ right-handed full twists. For an integer $n\geq 2$, let $K_{m,n}$ be the $(n,-1)$ cable of $K_m$. These knots were constructed in \cite{Y10}, and their maximal Thurston-Bennequin numbers were determined for $m\geq 0$ and $n\geq 2$ using rulings and a cabling formula. In this paper, we discuss $\overline{tb}(K_{m,n})$ for $m<0$ using the method introduced in Section~\ref{sec:method}. We remark that the cabling formula of $\overline{tb}$ obtained in \cite{Y10} does not work in this case. 

\begin{figure}[h!]
\begin{center}
\includegraphics[width=1.4in]{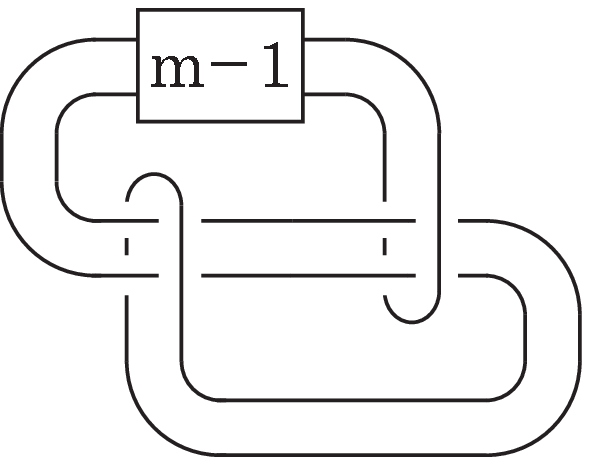}
\caption{$K_m$}
\label{fig:ribbon_knot}
\end{center}
\end{figure}

\subsection{Estimate of $\overline{tb}$ by standard construction}
To see effectiveness of our method, we here estimate $\overline{tb}(K_{m,n})$ by the standard cabling construction of a Legendrian representative (cf.\ Section 5 in \cite{Gol}). Before discussing the $(n,-1)$ cable $K_{m,n}$, we discuss its companion $K_m$. Since $K_m$ bounds a disk in $D^4$, the adjunction inequality shows $\overline{tb}(K_m)\leq -1$. On the other hand, for $m\leq -1$, we can easily check that $K_m$ is isotopic to the Legendrian knot $\mathcal{K}_m$ with $tb=-1$ shown in Figure~\ref{fig:Legendrian_basic}. Therefore, this Legendrian representative of $K_m$ realizes $\overline{tb}(K_m)=-1$. 

We draw $n$ copies of the front diagram of $\mathcal{K}_m$ each of which is slightly shifted to the vertical direction. Inserting $\frac{n-1}{n}$ right handed full twists to the resulting diagram appropriately, we obtain the Legendrian representative of $K_{m,n}$ in Figure~\ref{fig:cable}. Calculating the number of left cusps and the writhe, one can easily check that $tb$ of this representative is $-2n+1$. Thus the standard construction merely shows $\overline{tb}(K_{m,n})\geq -2n+1$. It seems difficult to realize a larger Thurston-Bennequin number by modifying this representative. 

\begin{figure}[h!]
\begin{center}
\includegraphics[width=2.7in]{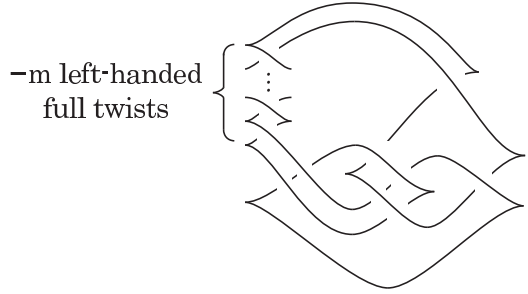}
\caption{A Legendrian representative of $K_m$ with $tb=-1$ $(m\leq -1)$}
\label{fig:Legendrian_basic}
\end{center}
\end{figure}

\begin{figure}[h!]
\begin{center}
\includegraphics[width=3.1in]{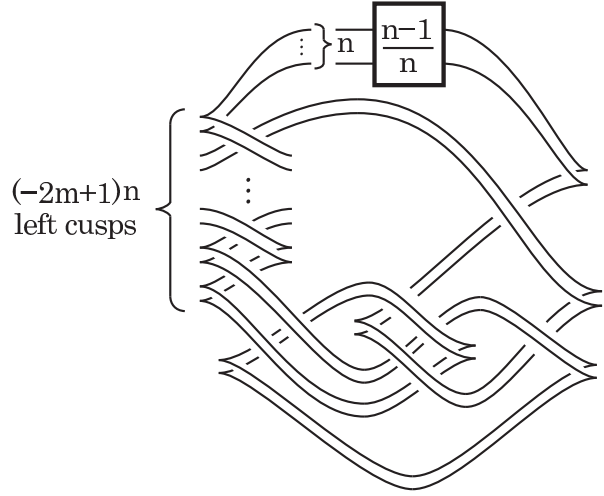}
\caption{A Legendrian representative of $K_{m,n}$ with $tb=-2n+1$ $(m\leq -1)$}
\label{fig:cable}
\end{center}
\end{figure}

\subsection{Step~\ref{step1}} Now we apply our method to $K_{m,n}$. We give necessary definitions to proceed Step~\ref{step1} of the method. For an integer $m$, let $Z^{(m)}$ be the 4-manifold shown in Figure~\ref{fig:Z_m}. Since the 2-handle goes over the 1-handle geometrically once after isotopy, $Z^{(m)}$ is diffeomorphic to $D^4$. We identify the boundary $\partial Z^{(m)}$ with $S^3$ via this diffeomorphism. Let $\widetilde{K}_m$ and  $\widetilde{K}_{m,n}$ be the unframed knots in $\partial Z^{(m)}=S^3$ given by Figures~\ref{fig:tilde_K_m} and \ref{fig:tilde_K_mn}, respectively. 
\begin{figure}[h!]
\begin{center}
\includegraphics[width=1.9in]{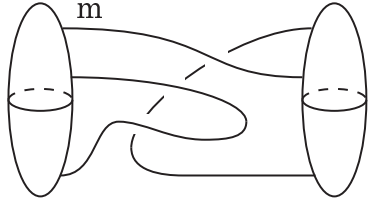}
\caption{The handlebody $Z^{(m)}$ which is diffeomorphic to $D^4$}
\label{fig:Z_m}
\end{center}
\end{figure}
\begin{figure}[h!]
\begin{center}
\includegraphics[width=1.9in]{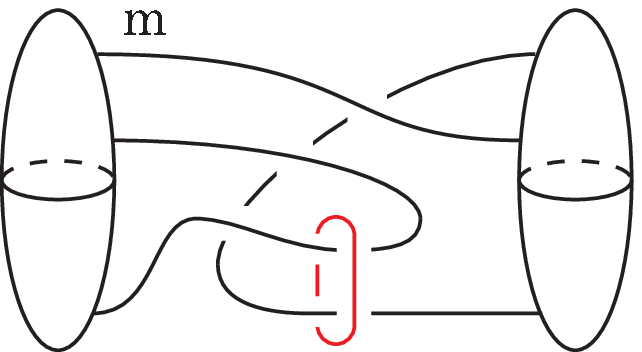}
\caption{The knot $\widetilde{K}_m$ in $\partial Z^{(m)}$}
\label{fig:tilde_K_m}
\end{center}
\end{figure}
\begin{figure}[h!]
\begin{center}
\includegraphics[width=2.5in]{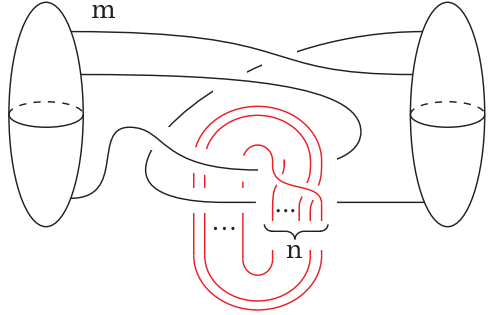}
\caption{The knot $\widetilde{K}_{m,n}$ in $\partial Z^{(m)}$}
\label{fig:tilde_K_mn}
\end{center}
\end{figure}

To apply Step~\ref{step1}, we show the lemma below. 
\begin{lemma}For integers $m,n$ with $n\geq 2$, the knots $K_{m}$ and $K_{m,n}$ are isotopic to the knots $\widetilde{K}_m$ and $\widetilde{K}_{m,n}$ in $\partial Z^{(m)}$, respectively. 
\end{lemma}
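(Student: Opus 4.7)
The plan is to verify the two claimed isotopies by performing the handle cancellation that converts $Z^{(m)}$ back to $D^4$ and checking that this carries $\widetilde{K}_m$ (respectively $\widetilde{K}_{m,n}$) to $K_m$ (respectively $K_{m,n}$). Since the 2-handle of $Z^{(m)}$ runs over the 1-handle geometrically once, there is a canonical cancellation move (Kirby move of the first kind) yielding a diffeomorphism $\partial Z^{(m)}\xrightarrow{\cong} S^3$, and it suffices to track the knots through this diffeomorphism.

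First, for $\widetilde{K}_m$, I would isotope the diagram in $\partial Z^{(m)}$ so that $\widetilde{K}_m$ runs disjointly from the cancellation region, then delete the canceling pair. The strands of $\widetilde{K}_m$ that pass through the 1-handle (i.e.\ through the dotted circle) must be routed along the attaching circle of the 2-handle, at the cost of introducing the framing/twisting prescribed by the 2-handle's coefficient. A direct Kirby calculus computation, essentially a slide of $\widetilde{K}_m$ over the 2-handle to eliminate all dotted-circle crossings followed by deletion of the canceling pair, should produce exactly the ribbon diagram of $K_m$ in Figure~\ref{fig:ribbon_knot}; the $(m-1)$ full twist box in $K_m$ arises from the framing of the 2-handle together with the twist built into the diagram of $Z^{(m)}$.

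For $\widetilde{K}_{m,n}$, I would argue more structurally. The knot $\widetilde{K}_{m,n}$ is drawn as an $n$-strand parallel of $\widetilde{K}_m$ with an extra twist, designed so that inside $\partial Z^{(m)}$ it is the $(n,-1)$ cable of $\widetilde{K}_m$ with respect to the Seifert framing of $\widetilde{K}_m$ in $\partial Z^{(m)}\cong S^3$. Since cabling is preserved under ambient diffeomorphism of $S^3$, the cancellation diffeomorphism carries $\widetilde{K}_{m,n}$ to $C_{n,-1}$ applied to the image of $\widetilde{K}_m$, which by the previous step is $K_m$. The only thing left to verify is that the framing used in the diagram of $\widetilde{K}_{m,n}$ agrees with the Seifert framing of $\widetilde{K}_m$ inside $\partial Z^{(m)}$; this is a local check on Figure~\ref{fig:tilde_K_mn} versus Figure~\ref{fig:tilde_K_m}, ensuring the parallel strands are drawn with blackboard framing equal to the $0$-framing.

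The main obstacle will be the bookkeeping in the first step: keeping accurate track of how the crossings and the twist box transform under the handle slide that moves $\widetilde{K}_m$ off the 1-handle. Once that computation is completed, the cable case follows essentially for free by naturality of the cabling construction under the diffeomorphism $\partial Z^{(m)}\cong S^3$.
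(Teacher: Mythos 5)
Your proposal matches the paper's proof in both structure and substance: the paper verifies $\widetilde{K}_m \simeq K_m$ by the handle-cancellation computation recorded in Figure~\ref{fig:cancel_knot}, and then deduces the cable case by observing that $\widetilde{K}_{m,n}$ is the $(n,-1)$ cable of $\widetilde{K}_m$ and that cabling is natural under the identification $\partial Z^{(m)}\cong S^3$. Your additional remark about checking that the parallel strands realize the Seifert framing is a sensible point of care but does not change the argument.
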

\begin{proof}By Figure~\ref{fig:cancel_knot}, we see that $\widetilde{K}_{m}$ is isotopic to $K_m$. 
Figures~\ref{fig:tilde_K_m} and \ref{fig:tilde_K_mn} and the definition of a cable knot show that $\widetilde{K}_{m,n}$ is the $(n,-1)$ cable of $\widetilde{K}_{m}$. Hence $\widetilde{K}_{m,n}$ is isotopic to ${K}_{m,n}$. 
\end{proof}
\begin{figure}[h!]
\begin{center}
\includegraphics[width=4.2in]{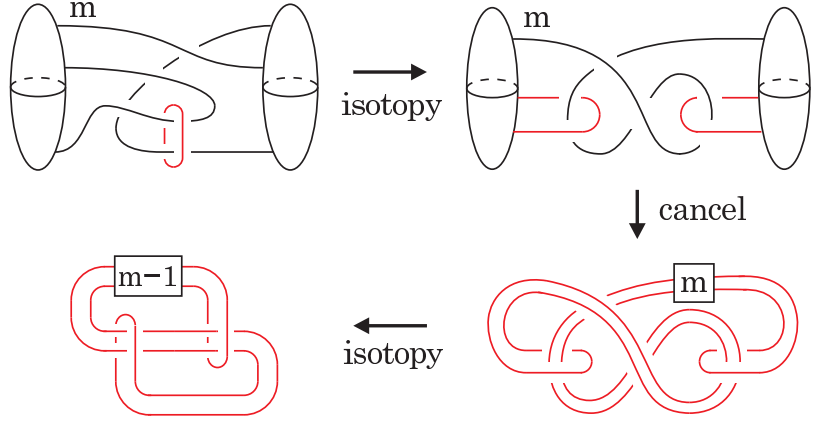}
\caption{Diagrams of the knot $K_m$ in $S^3$}
\label{fig:cancel_knot}
\end{center}
\end{figure}

We regard $\widetilde{K}_{m,n}$ as a knot in the boundary of the sub 1-handlebody $Z^{(m)}_1$ of $Z^{(m)}$. Then $\widetilde{K}_{m,n}$ is clearly an unknot in the boundary $\partial Z^{(m)}_1$, and we know a Legendrian representative of an unknot realizing $\overline{tb}$. Therefore we finished Step~\ref{step1}.

\subsection{Step~\ref{step2}}Next we apply Step~\ref{step2}. We note that, if the framing $m$ of the 2-handle of $Z^{(m)}$ is a sufficiently large negative number,  then we can obviously achieve this step. We first isotope $\widetilde{K}_{m,n}$ to its Legendrian representative realizing $\overline{tb}$ in $\partial Z^{(m)}_1$, and then we keep track of the 2-handle of $Z^{(m)}$ as shown in Figure~\ref{fig:local_unknot}. By putting the 2-handle into a Legendrian position, we obtain the Legendrian representative of $\widetilde{K}_{m,n}$ in $\partial Z^{(m)}_1$ shown in Figure~\ref{fig:Legendrian_1-h} for $m\leq -4n+3$. Note that $Z^{(m)}$ is now a Stein handlebody and that $tb(\widetilde{K}_{m,n})=-1$. The Legendrian representative of $\widetilde{K}_{m,n}$ thus gives a Legendrian representative of ${K}_{m,n}$ in the boundary $\partial Z^{(m)}$ of the Stein handlebody, since $\widetilde{K}_{m,n}$ represents $K_{m,n}$ in $\partial Z^{(m)}$. 

Since $\widetilde{K}_{m,n}$ bounds a disk in $Z^{(m)}_1$, the adjunction inequality shows $\overline{tb}(K_{m,n})\leq -1$. Therefore the Legendrian representative of $K_{m,n}$ in Figure~\ref{fig:Legendrian_1-h} realizes $\overline{tb}=-1$. Note that this value of $\overline{tb}$ is equal to the one induced from the front diagram of $S^3$ (see Remark~\ref{rem:framing}). This completes Step~\ref{step2}, and the proposition below follows. 
\begin{proposition}\label{prop:tb}$\overline{tb}(K_{m,n})=-1$ for $n\geq 2$ and $m\leq -4n+3$. 
\end{proposition}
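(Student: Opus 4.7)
The plan is to run the two-step recipe from Section~\ref{sec:method} with $X=Z^{(m)}$ and $\widetilde{K}=\widetilde{K}_{m,n}$, and to match this with an adjunction upper bound. The lemma just proved identifies $K_{m,n}$ with $\widetilde{K}_{m,n}\subset \partial Z^{(m)}$, and since $\widetilde{K}_{m,n}$ is an unknot in $\partial Z^{(m)}_1\cong S^1\times S^2$ bounding an obvious disk in $Z^{(m)}_1$, Step~\ref{step1} is already achieved. Because $\widetilde{K}_{m,n}$ is null-homologous in $\partial Z^{(m)}_1$, the adjunction inequality for Stein 4-manifolds gives $tb(\widetilde{\mathcal{K}}_{m,n})\leq 2g-1=-1$ for any Legendrian representative $\widetilde{\mathcal{K}}_{m,n}$, so also $\overline{tb}(K_{m,n})\leq -1$ once Step~\ref{step2} succeeds (by Remark~\ref{rem:framing}, the null-homologous hypothesis ensures $tb(\mathcal{K}_{m,n})=tb(\widetilde{\mathcal{K}}_{m,n})$, so the upper bound in $S^3$ follows from the one in $\partial Z^{(m)}_1$).

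To obtain the matching lower bound I would perform Step~\ref{step2} explicitly. First I would pick the standard Legendrian unknot realizing $\overline{tb}(\widetilde{K}_{m,n})=-1$ in $\partial Z^{(m)}_1$ and keep track of the attaching circle of the 2-handle of $Z^{(m)}$ during this isotopy, exactly as in Figure~\ref{fig:local_unknot}. After the isotopy, the 2-handle attaching circle is a specific topological knot in the complement, and I would put it into Legendrian position, counting cusps and writhe to read off the contact framing.

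The crux is the numerical condition $m\leq -4n+3$: this is exactly the bound that guarantees I can place the attaching circle as a Legendrian knot whose contact framing equals the prescribed $m$. The attaching circle acquires a writhe contribution of order $n$ (coming from the cabling parameter) together with a bounded number of cusps forced by the two kinks visible in Figure~\ref{fig:local_unknot}; adding the appropriate number of zig-zags yields contact framing $m+1$ precisely when $m\leq -4n+3$. Making this counting explicit is the one non-formal ingredient, and it is also where the bound $-4n+3$ is forced; I expect this to be the main obstacle, though it is a routine cusp/writhe computation on the diagram rather than a conceptual difficulty.

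Once Step~\ref{step2} is done, $Z^{(m)}$ is a Stein handlebody, its boundary carries the unique Stein fillable contact structure on $S^3$, and the Legendrian representative $\widetilde{\mathcal{K}}_{m,n}$ pushed into $\partial Z^{(m)}$ is a Legendrian representative $\mathcal{K}_{m,n}$ of $K_{m,n}$ with $tb(\mathcal{K}_{m,n})=-1$. Combined with the adjunction upper bound this gives $\overline{tb}(K_{m,n})=-1$, completing the proof of the proposition.
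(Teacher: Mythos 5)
Your overall strategy coincides with the paper's: Step~\ref{step1} is discharged because $\widetilde{K}_{m,n}$ is an unknot in $\partial Z^{(m)}_1$, the lower bound comes from carrying out Step~\ref{step2} by tracking the attaching circle of the $2$-handle through the isotopy (the paper's Figures~\ref{fig:local_unknot} and \ref{fig:Legendrian_1-h}) and checking that the hypothesis $m\leq -4n+3$ is exactly what lets the $m$-framed attaching circle be Legendrian realized with contact $-1$ framing, and the upper bound comes from the adjunction inequality applied to the disk that $\widetilde{K}_{m,n}$ bounds in $Z^{(m)}_1$. So the route is the same, and your identification of the cusp/writhe count on the tracked attaching circle as the place where $-4n+3$ is forced matches the paper.

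However, your upper-bound argument as written has a genuine logical gap. You bound $tb(\widetilde{\mathcal{K}}_{m,n})\leq -1$ for Legendrian representatives of $\widetilde{K}_{m,n}$ \emph{in $\partial Z^{(m)}_1$} and then assert, via Remark~\ref{rem:framing}, that ``the upper bound in $S^3$ follows from the one in $\partial Z^{(m)}_1$.'' That implication is false as stated: $\overline{tb}(K_{m,n})$ is a supremum over \emph{all} Legendrian representatives of $K_{m,n}$ in the standard contact $S^3$, and there is no reason every such representative arises from a Legendrian representative of $\widetilde{K}_{m,n}$ in the boundary of the sub $1$-handlebody. Remark~\ref{rem:framing} only compares framings for the representatives produced by the construction; it does not convert a bound in $\partial Z^{(m)}_1$ into a bound in $S^3$. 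The correct argument --- and the one the paper uses, also flagged in the remark after Step~\ref{step2} --- is that the disk bounded by $\widetilde{K}_{m,n}$ in $Z^{(m)}_1\subset Z^{(m)}\cong D^4$ is a smooth slice disk for $K_{m,n}$ in $D^4$, so $g_4(K_{m,n})=0$, and the adjunction (slice--Bennequin) inequality $tb(\mathcal{K})+|r(\mathcal{K})|\leq 2g_4(\mathcal{K})-1$ applied to an arbitrary Legendrian representative of $K_{m,n}$ in $S^3$ gives $\overline{tb}(K_{m,n})\leq -1$ unconditionally, independent of whether Step~\ref{step2} succeeds. You already have the disk in hand, so the fix is immediate, but the bound should be routed through $g_4$ in $D^4$ rather than through representatives in $\partial Z^{(m)}_1$.
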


\begin{figure}[h!]
\begin{center}
\includegraphics[width=4.5in]{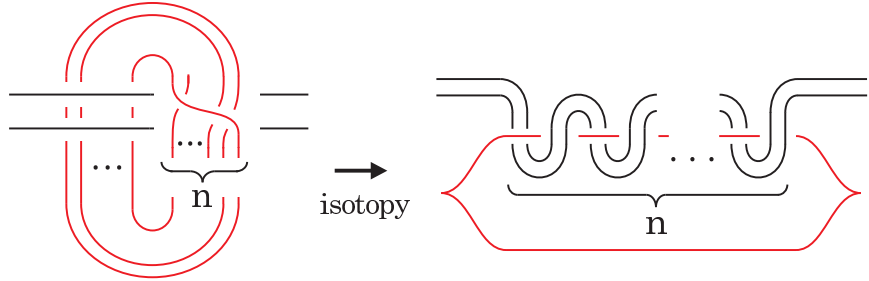}
\caption{local isotopy}
\label{fig:local_unknot}
\end{center}
\end{figure}

\begin{figure}[h!]
\begin{center}
\includegraphics[width=4.5in]{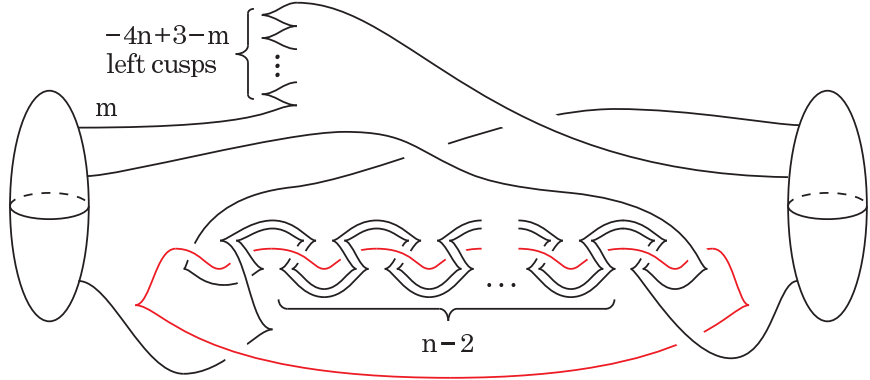}
\caption{Legendrian representative of $K_{m,n}$ with $tb=-1$ in the Stein handlebody diagram of $Z^{(m)}$ $(m\leq -4n+3)$}
\label{fig:Legendrian_1-h}
\end{center}
\end{figure}

Now we can easily prove Theorem~\ref{intro:thm:reducible}. 
\begin{proof}[Proof of Theorem~\ref{intro:thm:reducible}]For integers $n$ and $k$, let $X_{n,k}$ and $Y_{n,k}$ be the 4-manifolds in Figures~\ref{fig:X_n,k} and \ref{fig:Y_n,k}, respectively. Note that $\partial X_{n,k}$ is a homology 3-sphere, since $X_{n,k}$ is contractible. By \cite{Y10}, the 4-manifold represented by $K_{m,n}$ with $-n$-framing is diffeomorphic to $Y_{n,m+4n}$ for $n\geq 2$. The $-n$-surgery along $K_{m,n}$ thus yields the 3-manifold $\partial Y_{n,m+4n}$, which is clearly diffeomorphic to the connected sum $\partial{X_{n,m+4n}}\#L(n,1)$. Here $L(n,1)$ denotes the lens space given by $-n$-surgery along the unknot, following the convention in contact topology. The knot $K_{m,n}$ thus yields a reducible 3-manifold by $-n$-surgery for $n\geq 2$, since $\partial{X_{n,m+4n}}$ is not diffeomorphic to $S^3$ (\cite{Y10}). Here recall that the $r$-surgery along the $(p,q)$-cable of a non-trivial knot in $S^3$ yields a reducible 3-manifold if and only if $r=pq$ (Theorem~3 in \cite{GL}). Thus $K_{m,n}$ is not isotopic to $K_{m,n'}$ if $n\neq n'$. Therefore, by Proposition~\ref{prop:tb}, the infinite family of knots $\{K_{m,n}\mid n\geq 2, \; m\leq -4n+3\}$ satisfies the desired conditions. 
\end{proof}

\begin{figure}[h!]
\begin{center}
\includegraphics[width=1.2in]{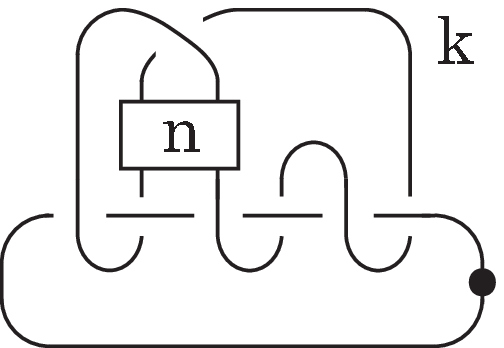}
\caption{$X_{n,k}$}
\label{fig:X_n,k}
\end{center}
\end{figure}
\begin{figure}[h!]
\begin{center}
\includegraphics[width=1.7in]{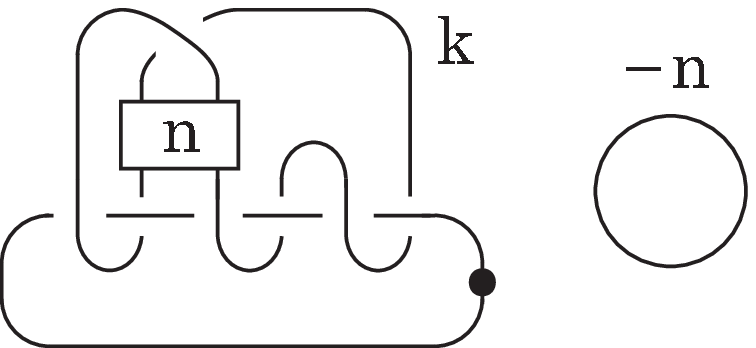}
\caption{$Y_{n,k}$}
\label{fig:Y_n,k}
\end{center}
\end{figure}
\begin{remark}We can construct many other counterexamples by using the construction in Remark~\ref{sec:method:remark:construction}. Indeed, if we construct $X$ and $\widetilde{K}$ so that $\widetilde{K}$ is unknot and that $K$ is the $(n,-1)$-cable of a non-trivial knot in $S^3$, then $\overline{tb}(K)=-1$, and $K$ yields a reducible 3-manifold by $-n$-surgery, giving a counterexample to Conjecture~\ref{conjecture}. 
\end{remark}
\subsection{Step~\ref{step3}} Finally we apply Step~\ref{step3} to obtain a Legendrian representative of $K_{m,n}$ realizing $\overline{tb}$ in the front diagram of $S^3$. 
We first apply local isotopies in Figure~\ref{fig:local_cancel} to the Stein handlebody diagram of $Z^{(m)}$ and the knot $K_{m,n}$. Note that these isotopies preserve $tb$ of $K_{m,n}$ and the 2-handle. To simplify the diagram, we use tangles $A$ and $B$ defined in Figure~\ref{fig:AB}. The resulting diagram  of $K_{m,n}$ and $Z^{(m)}$ is shown in the first diagram of Figure~\ref{fig:complicated}. Now we can easily isotope the 2-handle and $K_{m,n}$ so that the 2-handle goes over the 1-handle geometrically once and that $tb$ of these knots do not change. The resulting diagram is given in the second diagram of Figure~\ref{fig:complicated} (This isotopy can be easily seen by ignoring $K_{m,n}$ and tangles $A$ and $B$.).  

\begin{figure}[h!]
\begin{center}
\includegraphics[width=3.2in]{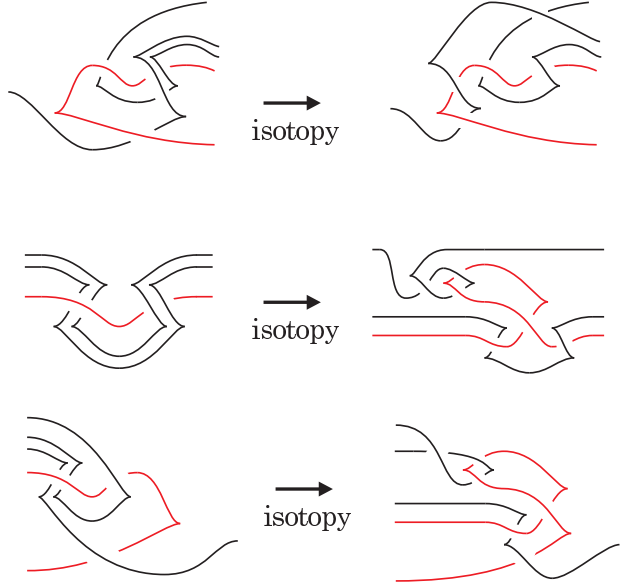}
\caption{Isotopies fixing the end points}
\label{fig:local_cancel}
\end{center}
\end{figure}

\begin{figure}[h!]
\begin{center}
\includegraphics[width=4.0in]{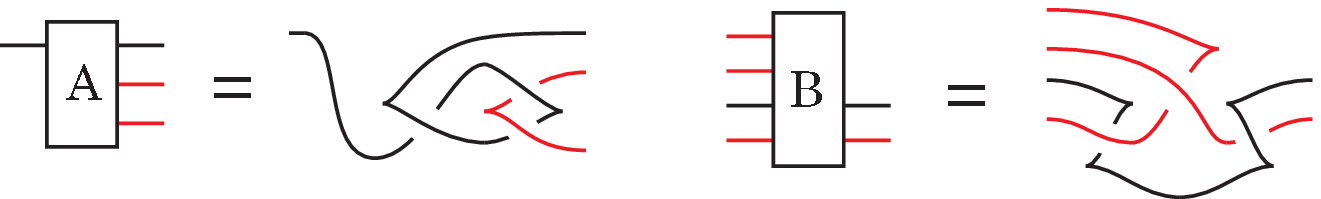}
\caption{Definition of tangles $A$ and $B$}
\label{fig:AB}
\end{center}
\end{figure}

\begin{figure}[h!]
\begin{center}
\includegraphics[width=4.5in]{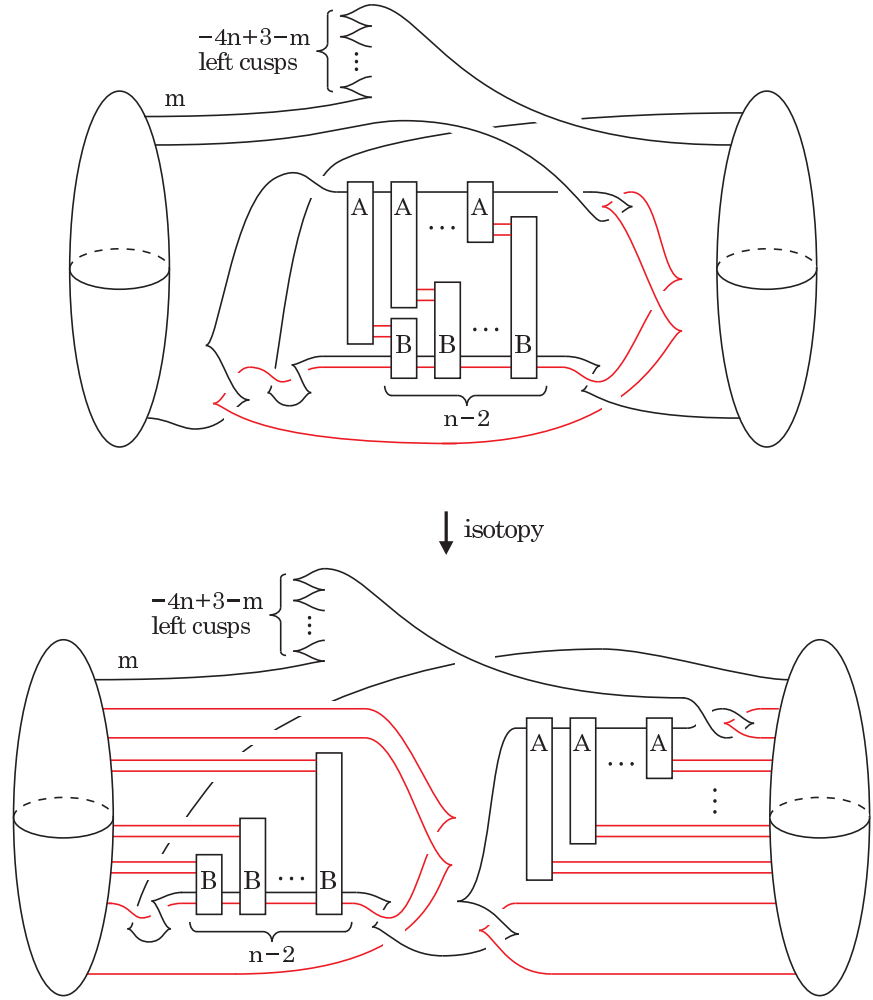}
\caption{Legendrian representatives of $K_{m,n}$ in Stein handlebody diagrams of $Z^{(m)}$ ($n\geq 2$ and $m\leq -4n+3$)}
\label{fig:complicated}
\end{center}
\end{figure}

Next we slide $K_{m,n}$ over the 2-handle so that $K_{m,n}$ does not go over the 1-handle (after suitable isotopy) and that $tb(K_{m,n})$ does not change. More specifically, we slide $K_{m,n}$ at the right most part of Figure~\ref{fig:complicated} as shown in Figure~\ref{fig:slide_local}, where the framings of the 2-handle are the contact $-1$ framings. We can easily check that this operation preserves $tb(K_{m,n})$ by counting the numbers of positive crossings, negative crossings and left cusps. Clearly we can isotope the resulting $K_{m,n}$ preserving $tb$ so that it does not go over the 1-handle.  Now we can cancel (erase) the canceling pair of 1- and 2-handles. The resulting diagram is given in Figure~\ref{fig:knot_complicated}, where we use the tangles $A_n, B_n$ defined in Figure~\ref{fig:ABn}. Therefore, this diagram gives a Legendrian representative of $K_{m,n}$ realizing $\overline{tb}=-1$ in the front diagram of $S^3$. We thus completed Step~\ref{step3}. 

\begin{figure}[h!]
\begin{center}
\includegraphics[width=4.0in]{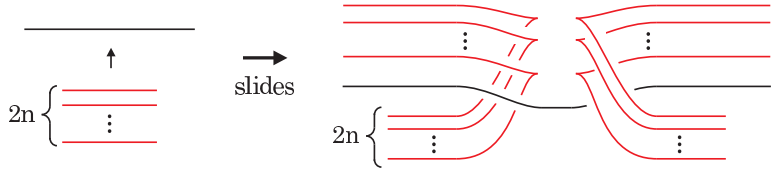}
\caption{sliding of $K_{m,n}$ over the 2-handle}
\label{fig:slide_local}
\end{center}
\end{figure}

\begin{figure}[h!]
\begin{center}
\includegraphics[width=3.2in]{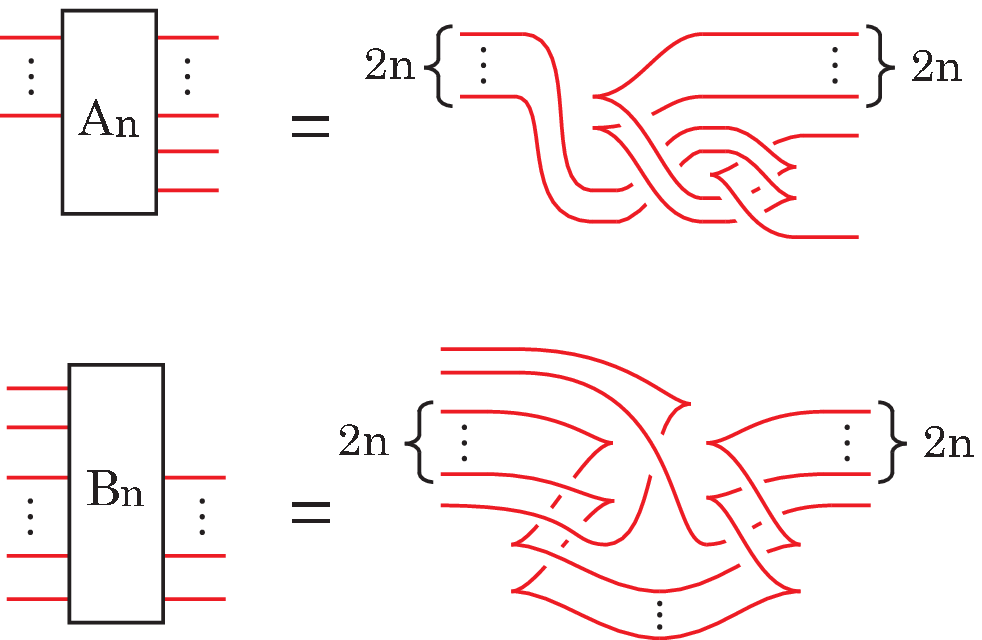}
\caption{Definition of tangles $A_n$ and $B_n$}
\label{fig:ABn}
\end{center}
\end{figure}

\begin{figure}[h!]
\begin{center}
\includegraphics[width=4.8in]{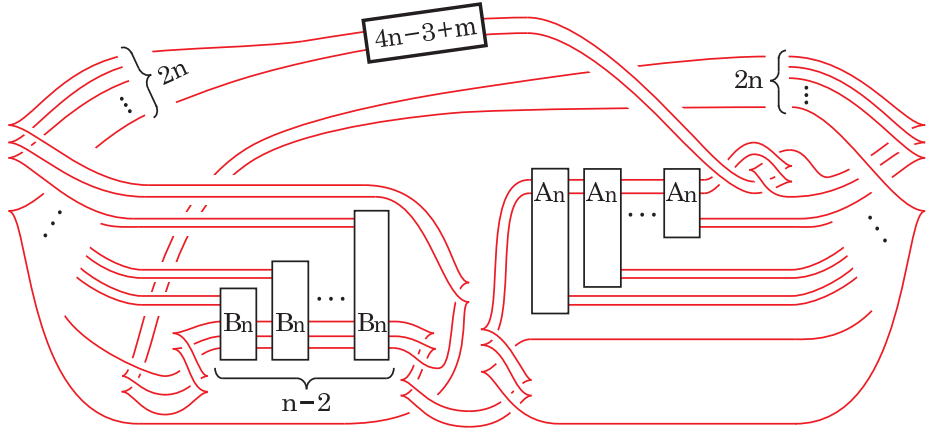}
\caption{Legendrian representative of $K_{m,n}$ realizing $\overline{tb}=-1$ ($n\geq 2$ and $m\leq -4n+3$)}
\label{fig:knot_complicated}
\end{center}
\end{figure}

\subsection{Variant of the method}\label{subsec:variant}
As we mentioned in Remark~\ref{sec:method:remark:construction}, our method is also effective for finding a good lower bound of $\overline{tb}$ by minor modification. We demonstrate this using the knot $K_{m,n}$ with $n\geq 2$ and $m\leq -2n-1$. 

Recall that $K_{m,n}$ is isotopic to the unframed knot $\widetilde{K}_{m,n}$ in the boundary of the handlebody $Z^{(m)}$ shown in Figure~\ref{fig:tilde_K_mn}. We regard $\widetilde{K}_{m,n}$ as a knot in the boundary of the sub 1-handlebody $Z^{(m)}_1$ of $Z^{(m)}$.  Since $\widetilde{K}_{m,n}$ is an unknot in $\partial Z^{(m)}_1$, we can isotope $\widetilde{K}_{m,n}$ to its Legendrian representative with $tb=-n+1$. We then isotope the 2-handle of $Z^{(m)}$ to its Legendrian representative fixing the representative of $\widetilde{K}_{m,n}$. The resulting diagram of $\widetilde{K}_{m,n}$ and $Z^{(m)}$ is shown in Figure~\ref{fig:variant}. Clearly this diagram gives a Stein handle decomposition of $Z^{(m)}$, and thus the Legendrian representative of $\widetilde{K}_{m,n}$ gives a Legendrian representative of $K_{m,n}$ with $tb=-n+1$ in the Stein fillable contact structure on $\partial Z^{(m)}\cong S^3$. Hence the proposition below follows.

\begin{figure}[h!]
\begin{center}
\includegraphics[width=3.5in]{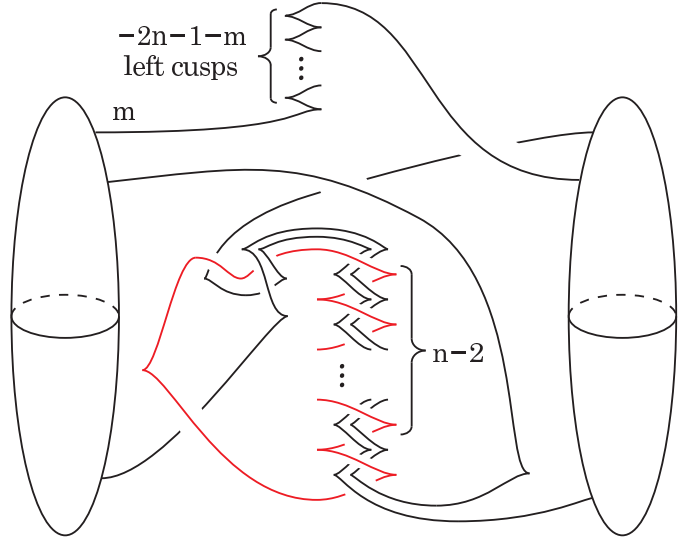}
\caption{Legendrian representative of $K_{m,n}$ with ${tb}=-n+1$ in the Stein handlebody diagram of $Z^{(m)}$ ($n\geq 2$ and $m\leq -2n-1$)}
\label{fig:variant}
\end{center}
\end{figure}

\begin{proposition}\label{prop:variant}$\overline{tb}(K_{m,n})\geq -n+1$ for $n\geq 2$ and $m\leq -2n-1$. 
\end{proposition}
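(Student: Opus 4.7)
The plan is to apply the variant of the method outlined at the start of Subsection~\ref{subsec:variant}, producing a Stein handle decomposition of $Z^{(m)}$ in which $\widetilde{K}_{m,n}$ already sits as a Legendrian knot with $tb=-n+1$. Since $\partial Z^{(m)}\cong S^3$ represents $K_{m,n}$ by $\widetilde{K}_{m,n}$, this will furnish a Legendrian representative of $K_{m,n}$ with $tb=-n+1$ in the standard tight contact structure on $S^3$, giving the desired lower bound.

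First, by the lemma identifying $K_{m,n}$ with $\widetilde{K}_{m,n}\subset\partial Z^{(m)}$, I would forget the $2$-handle and view $\widetilde{K}_{m,n}$ as a knot in the boundary $\partial Z^{(m)}_1$ of the sub $1$-handlebody. Because $\widetilde{K}_{m,n}$ is the $(n,-1)$-cable of an unknotted arc passing through the $1$-handle once, it is itself an unknot in $\partial Z^{(m)}_1\cong S^1\times S^2$, so it admits a Legendrian representative in Gompf standard form whose $tb$ (measured against the $0$-framing induced from the dotted circle) equals $-n+1$; this is the Legendrian knot appearing in Figure~\ref{fig:variant}, and the value of $tb$ is read off directly from the cusps, crossings, and the $0$-framing convention for $1$-handlebodies.

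Next, with that Legendrian representative of $\widetilde{K}_{m,n}$ held fixed, I would isotope the attaching circle of the $2$-handle of $Z^{(m)}$ to a Legendrian curve in $\partial Z^{(m)}_1$, stabilizing as needed so that the contact $-1$ framing equals the $m$-framing prescribed by the original handlebody picture. This is where the hypothesis $m\leq -2n-1$ is used: a direct count in the diagram shows that the $m$-framing is achievable as a contact $-1$ framing precisely once $|m|$ is large enough to absorb the writhe contributed by running parallel to the $n$ strands of $\widetilde{K}_{m,n}$ (plus the stabilizations required to produce a Legendrian attaching arc). Once this framing condition is met, $Z^{(m)}$ acquires a Stein structure by Eliashberg's theorem, and the induced contact structure on $\partial Z^{(m)}\cong S^3$ is Stein fillable, hence the unique tight one.

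The final step is to transport the Thurston-Bennequin count from the $1$-handlebody picture into $S^3$. Because $\widetilde{K}_{m,n}$ is null-homologous in $\partial Z^{(m)}_1$ (it bounds a disk), Remark~\ref{rem:framing} ensures that the $tb$ computed in the Stein handlebody diagram agrees with the $tb$ of the resulting Legendrian knot in the standard $S^3$, so that Legendrian representative realizes $tb=-n+1$, proving $\overline{tb}(K_{m,n})\geq -n+1$. The main obstacle is the framing verification in the middle step: one must check that the explicit Legendrian diagram of the $2$-handle attaching circle in Figure~\ref{fig:variant} has writhe and cusp count compatible with the framing $m$ exactly when $m\leq -2n-1$, which requires a careful but routine bookkeeping of twists introduced by the cabling.
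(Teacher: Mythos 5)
Your argument follows the paper's proof of Proposition~\ref{prop:variant} essentially verbatim: take a Legendrian representative of the unknot $\widetilde{K}_{m,n}$ with $tb=-n+1$ in $\partial Z^{(m)}_1$, then Legendrian-realize the attaching circle of the $2$-handle so that its contact $-1$ framing equals $m$ (this is exactly where $m\leq -2n-1$ enters), obtaining a Stein handle decomposition of $Z^{(m)}\cong D^4$ and hence, via Remark~\ref{rem:framing} and the null-homology of $\widetilde{K}_{m,n}$, a $tb=-n+1$ representative of $K_{m,n}$ in the standard tight $S^3$. The only quibble is your parenthetical description of $\widetilde{K}_{m,n}$ as a cable of an arc passing through the $1$-handle once, which as stated would make it homologically non-trivial in $\partial Z^{(m)}_1$ and contradict the null-homology you correctly use at the end; this does not affect the substance of the argument.
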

It seems difficult to obtain this estimate without using a handlebody diagram of $D^4$. We remark that we can also draw a Legendrian representative with $tb=-n+1$ in the front diagram of $S^3$, similarly to Step~\ref{step3}. 

\begin{remark}In \cite{Y10}, 
we discussed the following problem. ``Assume that a framed knot in $S^3$ represents a 4-manifold admitting a Stein structure. Is the framing less than the maximal Thurston-Bennequin number of the knot?'' 
Since we proved in \cite{Y10} that the 4-manifold represented by $-n$-framed $K_{m,n}$ admits a Stein structure for $n\geq 2$ and $m\leq -2n-1$, it is natural to ask if the framing $-n$ is less than $\overline{tb}(K_{m,n})$. (We showed the existence of a Stein structure by checking that this 4-manifold is diffeomorphic to the boundary connected sum of two compact Stein 4-manifolds.) The above proposition tells that the framing is indeed less than $\overline{tb}(K_{m,n})$, giving a supporting evidence for the above problem. 
\end{remark}

Characterizing an unknot is a natural question in knot theory, and various characterizations are known. Here we propose the following question as a potential characterization given by maximal Thurston-Bennequin numbers. Recall that $C_{p,q}(K)$ denotes the $(p,q)$-cable of a knot $K$ in $S^3$. 

\begin{question}If a knot $K$ in $S^3$ satisfies $\overline{tb}(C_{p,-1}(K))=-1$ for any positive integer $p$, is $K$ the unknot?
\end{question}

We remark that, for each positive integer $N$, Proposition~\ref{prop:tb} implies the existence of a non-trivial knot $K$ satisfying $\overline{tb}(C_{p,-1}(K))=-1$ for any positive integer $p\leq N$. 



\end{document}